\newtheorem{theorem}{Theorem}[section]
\newtheorem{lemma}[theorem]{Lemma}
\DeclareMathOperator{\BB}{{BB}}
\DeclareMathOperator{\cl}{cl}
\DeclareMathOperator{\PG}{PG}
\DeclareMathOperator{\AG}{AG}
\newcommand{\del}{\setminus}
\newcommand{\con}{/}
\begin{document}

\sloppy

\title{A geometric version of the Andr\'asfai-Erd\H os-S\' os Theorem}

\author{Jim Geelen}
\email{jim.geelen@uwaterloo.ca}
\address{Department of Combinatorics and Optimization,
University of Waterloo, Canada}
\thanks{This research is was partially supported by 
a grant from the Office of Naval Research [N00014-10-1-0851].}

\subjclass{05B25, 05B35}
\keywords{matroid, projective geometry, blocking set}
\date{\today}

\begin{abstract}
For each odd integer $k\ge 5$, we prove that,
if $M$ is a simple rank-$r$ binary matroid with no odd circuit of length
less than $k$ and with $|M| > k 2^{r-k+1}$, 
then $M$ is isomorphic to a restriction of the 
rank-$r$ binary affine geometry; this bound is tight for all $r\ge k-1$.
We use this to give a simpler proof of the following
result of Govaerts and Storme:
for each integer $n\ge 2$,  
if $M$ is a simple rank-$r$ binary matroid with no 
$\PG(n-1,2)$-restriction and with
$|M| > \left(1-\frac{11}{2^{n+2}}\right) 2^r$,
then $M$ has critical number at most $n-1$.
That result is a geometric analogue of a theorem 
of Andr\' asfai, Erd\H os and S\' os in extremal graph theory.
\end{abstract}

\maketitle

\section{Introduction}
Our main result is:
\begin{theorem}\label{main}
For each odd integer $k\ge 5$ and each integer $r\ge k-1$, 
if $M$ is a simple rank-$r$ binary matroid with no odd circuit of length
less than $k$ and with $|M| > \frac{k}{2^{k-1}} 2^r$, 
then $M$ is isomorphic to a restriction of the 
rank-$r$ binary affine geometry.
\end{theorem}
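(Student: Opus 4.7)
The approach is by contradiction. Suppose $M$ satisfies the hypotheses but is not a restriction of the rank-$r$ binary affine geometry; since a simple binary matroid is a restriction of the rank-$r$ affine geometry if and only if it has no odd circuit, this means $M$ contains some odd circuit. Let $C = \{c_1, \ldots, c_c\}$ be a shortest odd circuit of $M$, so $c \ge k$. I will bound $|M|$ by controlling its intersection with the subspace $\langle C \rangle \le \GF(2)^r$ (of dimension $c-1$) and with each of the $2^{r-c+1}-1$ non-trivial cosets of $\langle C \rangle$ in $\GF(2)^r$.

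\emph{Step 1: $M \cap \cl(C) = C$.} Any element $x \in M \cap \cl(C)$ admits a representation $x = \sum_{i \in S} c_i$ for some $S \subseteq [c]$ (unique up to swapping $S$ with $[c] \setminus S$). Since $c$ is odd, exactly one of the two dependencies $\{x\} \cup \{c_i : i \in S\}$ and $\{x\} \cup \{c_i : i \in [c] \setminus S\}$ has odd size, of sizes $|S|+1$ and $c - |S| + 1$ respectively; by the minimality of $|C|$ this odd dependency must have size at least $c$, forcing $|S| \in \{1, c-1\}$ and hence $x \in C$.

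\emph{Step 2: $|M \cap K| \le c$ for each non-trivial coset $K$ of $\langle C \rangle$.} For distinct $y, y' \in M \cap K$, we have $y + y' \in \langle C \rangle \setminus \{0\}$; writing $y + y' = \sum_{i \in T} c_i$ and applying the parity argument of Step 1 to the dependencies $\{y, y'\} \cup T$ and $\{y, y'\} \cup ([c] \setminus T)$ forces $y + y' = c_a + c_b$ for some pair $\{a, b\} \subseteq [c]$. Fix $y_0 \in M \cap K$ and write each other element of $M \cap K$ as $y_i = y_0 + c_{p_i} + c_{q_i}$ for some pair $\{p_i, q_i\}$. For $i \ne j$, the identity $y_i + y_j = c_{p_i} + c_{q_i} + c_{p_j} + c_{q_j}$ must itself be a pair-sum, yielding a $\GF(2)$-dependence on a $6$-element multi-set of indices. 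Since $\sum_a c_a = 0$ is the unique such dependence among $\{c_1, \ldots, c_c\}$ and $c$ is odd (so the ``all-odd multiplicities'' case is ruled out by a parity check), a multiplicity count forces the pairs $\{p_i, q_i\}$ and $\{p_j, q_j\}$ to share exactly one index. Thus the family of such pairs is intersecting, and by the Erd\H os-Ko-Rado theorem (for $c \ge 5$) has size at most $c - 1$, giving $|M \cap K| \le 1 + (c-1) = c$.

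Combining the two steps, $|M| \le c + (2^{r-c+1} - 1)\cdot c = c \cdot 2^{r-c+1}$. Since $c \mapsto c/2^{c-1}$ is strictly decreasing for $c \ge 2$ and $c \ge k$, this yields $|M| \le k \cdot 2^{r-k+1} = \frac{k}{2^{k-1}} 2^r$, contradicting the hypothesis. The main technical point is the multiplicity-counting argument in Step 2 that forces the intersecting-family structure on the encoding pairs; the bound $|M \cap K| \le c$ is tight, matched by the ``star'' configuration where all pairs share a common index, and this tightness is precisely what makes the constant $\frac{k}{2^{k-1}}$ in the theorem sharp.
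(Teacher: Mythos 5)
Your proof is correct and follows essentially the same route as the paper's: your Step 1 is its Case 1 (showing $\cl_M(C)\cap E(M)=C$), your Step 2 with the intersecting family of index pairs is its Case 2 (bounding each parallel class of $M\con C$ by $k$), and your final coset count is its Case 3 bound $|M|\le k+k(2^{r-k+1}-1)$. The differences are cosmetic: you work with cosets of the span of $C$ rather than with $M\con C$, and you invoke Erd\H os--Ko--Rado where the paper instead finds two disjoint pairs directly and derives a short odd cycle from a symmetric difference of circuits.
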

Examples showing that the bound is tight
are given in Section~\ref{examples}.
We will prove Theorem~\ref{main} in Section~\ref{proofs}.
In the remainder of this introduction we discuss the motivation.

We will call a matroid {\em $N$-free} if it has
no restriction isomorphic to $N$.
Bose and Burton~[\ref{bb}] proved the following theorem.
\begin{theorem}[Bose-Burton Theorem]\label{bbthm}
For all integers $r$ and $n$ with $r\ge n\ge 2$,
if $M$ is a simple rank-$r$ $\PG(n-1,2)$-free binary matroid,
then $|M| \le \left(1-\frac{1}{2^{n-1}}\right)2^r$.
\end{theorem}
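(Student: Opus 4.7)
The plan is to recast Theorem~\ref{bbthm} as an equivalent lower bound on the complement of $E(M)$ in $\PG(r-1,2)$, and then prove that bound by induction on $r$ via a double-counting argument over hyperplanes.

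Set $B := \PG(r-1,2) \setminus E(M)$. Then $M$ is $\PG(n-1,2)$-free if and only if $B$ meets every rank-$n$ flat of $\PG(r-1,2)$, and because $|B| = (2^r - 1) - |M|$, the claimed inequality $|M| \le (1 - 1/2^{n-1}) 2^r$ is equivalent to
\[
|B| \;\ge\; 2^{r-n+1} - 1.
\]
I would prove this blocking-set inequality by induction on $r \ge n$. The base case $r = n$ is immediate: the only rank-$n$ flat of $\PG(n-1,2)$ is the whole geometry, so $B$ cannot be empty, and indeed $2^{r-n+1} - 1 = 1$.

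For the inductive step, fix $r > n$. For every hyperplane $H$ of $\PG(r-1,2)$, each rank-$n$ flat of $H$ is also a rank-$n$ flat of $\PG(r-1,2)$, so $B \cap H$ blocks all rank-$n$ flats of $H \cong \PG(r-2,2)$; by the inductive hypothesis $|B \cap H| \ge 2^{r-n} - 1$. Double-counting incidences between $B$ and the $2^r - 1$ hyperplanes (each $b \in B$ lies in exactly $2^{r-1} - 1$ of them) gives
\[
|B|\,(2^{r-1} - 1) \;=\; \sum_H |B \cap H| \;\ge\; (2^r - 1)(2^{r-n} - 1),
\]
which after using $2^r - 1 = 2(2^{r-1} - 1) + 1$ rearranges to
\[
|B| \;\ge\; \bigl(2^{r-n+1} - 2\bigr) + \frac{2^{r-n} - 1}{2^{r-1} - 1}.
\]

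The delicate point is that this averaged bound falls just short of the target $2^{r-n+1} - 1$: the leftover fraction $(2^{r-n}-1)/(2^{r-1}-1)$ is strictly positive and, since $n \ge 2$, strictly less than $1$. However, $|B|$ is an integer, so this is enough to force $|B| \ge 2^{r-n+1} - 1$ and close the induction. I expect the integrality step to be the main subtlety of the argument: pure averaging over hyperplanes is off by one and the proof only succeeds because the leftover term lies in the open interval $(0,1)$, which in turn uses the hypotheses $r > n$ and $n \ge 2$.
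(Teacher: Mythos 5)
Your argument is correct, but there is nothing in the paper to compare it against: Theorem~\ref{bbthm} is stated as a quoted result of Bose and Burton~[\ref{bb}] and is never proved here (the paper only remarks that it follows from the stronger Geometric Andr\'asfai--Erd\H os--S\'os Theorem, whose proof in turn assumes it implicitly via the extremal examples $\BB(r,n-1)$). Judged on its own, your proof is a clean and standard route: the reformulation as the blocking-set bound $|B|\ge 2^{r-n+1}-1$ is exactly the equivalence the paper itself exploits (a rank-$n$ flat disjoint from $B$ is a $\PG(n-1,2)$-restriction, as in Claim~1 of the proof of the geometric AES theorem), the incidence count $|B|(2^{r-1}-1)=\sum_H|B\cap H|\ge(2^r-1)(2^{r-n}-1)$ is right, and the integrality step is handled correctly: the leftover term $(2^{r-n}-1)/(2^{r-1}-1)$ lies strictly between $0$ and $1$ precisely because $r>n$ and $n\ge 2$, so rounding up recovers the target bound. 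One small point worth making explicit is why a $\PG(n-1,2)$-restriction must be a full rank-$n$ flat of the ambient geometry (a simple rank-$n$ binary matroid with $2^n-1$ elements is closed), which is what makes ``$\PG(n-1,2)$-free'' equivalent to ``$B$ meets every rank-$n$ flat''; with that observed, the induction is complete and self-contained.
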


Note that, if $F$ is a rank-$(r-n+1)$ flat in
$\PG(r-1,2)$, then the matroid $M=\PG(r-1,2)\del F$
attains equality in the Bose-Burton Theorem.
We will denote $\PG(r-1,2)\del F$ by $\BB(r,n-1)$.
Note that $\BB(r,1)$ is isomorphic to the 
affine geometry $\AG(r-1,2)$.
Bose and Burton proved that $\BB(r,n-1)$ is the
only matroid that attains equality in their theorem,
but the following result is considerably stronger.

The {\em critical number} of a simple rank-$r$
binary matroid $M$ is equal to the minimum integer
$c$ such that $M$ is isomorphic to a restriction of $\BB(r,c)$.
Equivalently, $c$ is the minimum number of cocycles
of $M$ required to cover $E(M)$. (Here by a {\em cocycle}
we mean a disjoint-union of cocircuits.)
The following result was proved by Govaerts and Storme~[\ref{gs}];
it is analogous to a theorem in extremal graph theory 
due to Andr\'asfai, Erd\H os and S\' os~[\ref{aes}]. We will 
review the related work in graph theory in the next section. 
\begin{theorem}[Geometric Andr\' asfai-Erd\H os-S\' os Theorem] \label{gsthm}
Let  $n\ge 2$ be an integer and let $\epsilon = \frac{3}{2^{n+2}}$.  
Then, for each integer $r\ge n+2$,  
if $M$ is a simple rank-$r$ $\PG(n-1,2)$-free binary matroid
with $|M| > \left(1-\frac{1}{2^{n-1}} - \epsilon\right) 2^r$,
then $M$ has critical number at most $n-1$.
\end{theorem}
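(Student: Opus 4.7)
We proceed by induction on $n$. The base case $n=2$ is precisely Theorem~\ref{main} with $k=5$: being $\PG(1,2)$-free is the same as being triangle-free, i.e., having no odd circuit of length less than $5$; the density threshold $\left(1-\frac{1}{2}-\frac{3}{16}\right)2^r = \frac{5}{16}\cdot 2^r$ coincides with $\frac{k}{2^{k-1}}2^r$ for $k=5$; and the conclusion that $M$ is a restriction of $\AG(r-1,2)=\BB(r,1)$ is the statement that $M$ has critical number at most $1=n-1$.

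For the inductive step, suppose the theorem holds at level $n-1$ and let $M$ satisfy the hypotheses at level $n\ge 3$. The plan is to pick an element $e\in E(M)$, pass to $N:=\si(M/e)$ (a simple binary matroid of rank $r-1$), check that $N$ meets the hypotheses at level $n-1$, apply induction, and lift. For the lift, induction yields a rank-$(r-n+1)$ flat $G\subseteq \PG(r-2,2)=\PG(r-1,2)/\langle e\rangle$ disjoint from $N$; its preimage $\hat G$ in $\PG(r-1,2)$ is a rank-$(r-n+2)$ flat containing $e$ with $E(M)\cap \hat G=\{e\}$, and any hyperplane $G'$ of $\hat G$ avoiding $e$ --- of which there are $2^{r-n+1}$ --- is then a rank-$(r-n+1)$ flat of $\PG(r-1,2)$ disjoint from $E(M)$, certifying critical number at most $n-1$.

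The density bookkeeping is automatic. We have $|N|=|M|-1-t_e$, where $t_e$ is the number of triangles of $M$ through $e$. Since $M$ is $\PG(n-1,2)$-free and $r\ge n$, $|E(M)|\le 2^r-2$, so $t_e\le \lfloor(|M|-1)/2\rfloor\le 2^{r-1}-2$ for every $e$. A short calculation shows that the hypothesis $|M|>\left(1-2^{-(n-1)}-3\cdot 2^{-(n+2)}\right)2^r$ then yields $|N|>\left(1-2^{-(n-2)}-3\cdot 2^{-(n+1)}\right)2^{r-1}$, which is the required threshold at level $n-1$; and the rank condition $r\ge n+2$ passes through as $r-1\ge (n-1)+2$.

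The main obstacle is ensuring that $N$ itself is $\PG(n-2,2)$-free for some choice of $e$. A $\PG(n-2,2)$-restriction of $N$ corresponds to a rank-$n$ flat $\tilde F\ni e$ of $\PG(r-1,2)$ such that each of the $2^{n-1}-1$ pairs $\{x,x+e\}\subseteq \tilde F\setminus\{e\}$ meets $M$; this is strictly weaker than $\tilde F\subseteq M$, so the $\PG(n-1,2)$-freeness of $M$ does not directly preclude it. I would argue by contradiction: if every $e\in E(M)$ were ``bad'' in this sense, then a double-counting of the sets $\bar M\cap \tilde F_e$ against the constraint that $\bar M:=\PG(r-1,2)\setminus E(M)$ meets every rank-$n$ flat should force $|\bar M|>\left(2^{-(n-1)}+3\cdot 2^{-(n+2)}\right)2^r$, contradicting the density hypothesis. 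This counting step, whose slack is exactly the $\epsilon=3\cdot 2^{-(n+2)}$ in the statement, is the main technical content of the argument.
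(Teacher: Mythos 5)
Your base case, your lifting step (pulling a rank-$(r-n+1)$ flat of $\PG(r-2,2)$ disjoint from $\si(M/e)$ back to a rank-$(r-n+1)$ flat of $\PG(r-1,2)$ disjoint from $E(M)$), and your density bookkeeping $|N|=|M|-1-t_e\ge |M|-2^{r-1}+1$ are all correct. But the step you yourself flag as ``the main technical content'' --- that some $e\in E(M)$ has $\si(M/e)$ being $\PG(n-2,2)$-free --- is not proved, and the double-counting you sketch does not close. If every $e$ is bad, each witness flat $\tilde F_e$ is only guaranteed to contain \emph{at least one} point of $\bar M$ (it is a rank-$n$ flat not contained in $E(M)$), and nothing prevents all $|M|$ of these witness flats from sharing the same few holes; so counting incidences $(e,b)$ with $b\in \tilde F_e\cap \bar M$ gives a lower bound of $|M|$ on the incidence count but no lower bound on $|\bar M|$ beyond $1$. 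I do not see how to extract $|\bar M|>\frac{11}{2^{n+2}}2^r$ from this, and it is not even clear that the claim itself is true for matroids just above the threshold, since at the extremal density every $e$ is bad in the examples of Theorem~\ref{gsttight}. As written, the proof has a genuine gap at its crux.

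It is worth seeing how the paper's proof avoids exactly this obstacle. Instead of contracting, it fixes a point $p$ on a tangent line and applies the inductive hypothesis to $M|X$, where $X$ is the set of points $q$ in a hyperplane off $p$ such that $\{p,q\}$ spans a \emph{triangle of $M$}. For this restriction, $\PG(n-2,2)$-freeness really is inherited from the $\PG(n-1,2)$-freeness of $M$, because a $\PG(n-2,2)$ inside $X$ spans, together with $p$ and the third points of the triangles, a full $\PG(n-1,2)$ inside $M$ --- precisely the implication that fails for $\si(M/p)$, where a pair $\{x,x+p\}$ need only \emph{meet} $M$. The price is that the conclusion of that induction does not directly yield a flat of holes: it only yields a rank-$(r-n+2)$ flat through $p$ in which every pair $\{x,p+x\}$ meets $\bar M$, i.e.\ a hyperplane $H$ with $|\bar M\cap H|\ge 2^{r-n+1}-1$. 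A second application of the inductive hypothesis (to $M|(E(M)\cap H)$) then produces a rank-$(n-1)$ flat inside $E(M)\cap H$, and counting the $2^{r-n}$ rank-$n$ flats through it that leave $H$ --- each forced to contain a hole outside $H$ --- gives $|\bar M|\ge 2^{r-n}+2^{r-n+1}-1$, the contradiction. If you want to salvage a contraction-based induction you would need a genuinely new idea for your freeness claim; otherwise I would recommend reorganizing along these lines.
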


A simple binary matroid is called {\em affine} if it is
isomorphic to a restriction of a binary affine geometry; 
that is, the ground set is itself a cocycle.
Note that the $n=2$ instance of the Geometric 
Andr\'asfai-Erd\H os-S\' os
Theorem is the same as the $k=5$ instance of Theorem~\ref{main},
and both equivalent to the following result.
\begin{theorem}\label{tfthm}
For each integer $r\ge 4$,
if $M$ is a simple rank-$r$ triangle-free binary matroid
with $|M| > \frac{5}{16} 2^r$, then $M$ is affine.
\end{theorem}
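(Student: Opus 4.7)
The plan is to proceed by induction on $r$, with the base case $r=3$ verified directly: in the Fano plane any triangle-free set of size at least $3$ consists of non-collinear points, and a short counting argument shows that the complement of such a set contains exactly one line, so the set lies in some affine complement $\AG(2,2)$. For the inductive step I first average $|M\cap H|$ over the $2^r-1$ hyperplanes $H$ of the ambient $\PG(r-1,2)$: each point of $M$ lies in exactly $2^{r-1}-1$ of them, so the mean is $|M|(2^{r-1}-1)/(2^r-1)$, which a routine calculation shows strictly exceeds $\frac{5}{16}2^{r-1}$ whenever $|M|>\frac{5}{16}2^r$. The elementary triangle-free bound $|M\cap H|\le 2^{s-1}$ for a rank-$s$ section rules out $s<r-1$ for a maximiser, so the inductive hypothesis applies to the resulting rank-$(r-1)$ section and produces a rank-$(r-2)$ flat $K\subseteq H$ with $M\cap H\subseteq H\setminus K$; in particular $M\cap K=\emptyset$.

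Exactly three hyperplanes $H_0=H,H_1,H_2$ of the ambient geometry contain $K$, and $\PG(r-1,2)\setminus K$ is the disjoint union of the three cosets $H_i\setminus K$. Setting $M_i:=M\cap H_i$ and $m_i:=|M_i|$, we have $m_0>\frac{5}{8}2^{r-2}$ by the choice of $H$. Identifying the $(r-2)$-dimensional subspace $W$ underlying $K$ with $\GF(2)^{r-2}$, I choose $w_i\in H_i\setminus K$ so that $k:=w_0+w_1+w_2\in W$ and define $A_i\subseteq W$ by $M_i=w_i+A_i$. Checking the image of a potential triangle in $V/W\cong\GF(2)^2$ shows that the only possible triangles in $M$ have one vertex in each $M_i$, and the triangle-free hypothesis then becomes $A_0\cap(A_1+A_2+k)=\emptyset$, yielding the key inequality $|A_1+A_2|\le 2^{r-2}-m_0$.

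To finish I apply Kneser's theorem to the sumset $A_1+A_2$ in the abelian group $W$: if $S$ is its stabilizer, then $|A_1+A_2|\ge m_1+m_2-|S|$, and combining with the previous bound gives $|M|\le 2^{r-2}+|S|$. The hypothesis $|M|>\frac{5}{4}\cdot 2^{r-2}$ then forces $|S|>2^{r-4}$, so $|S|$ must be $2^{r-3}$ or $2^{r-2}$. Since $A_1+A_2$ is a union of $S$-cosets of size $|S|$, and $|A_1+A_2|\le 2^{r-2}-m_0<|S|$ in both cases (using $m_0>\frac{5}{8}2^{r-2}>2^{r-3}$), the sumset must be empty, so $m_1=0$ or $m_2=0$, and $M$ is disjoint from one of $H_1,H_2$, hence affine. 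The main obstacle is the Kneser step, which is where the constant $\frac{5}{16}$ is essential: the tight example (the preimage of a five-element circuit under a linear projection $\GF(2)^r\to\GF(2)^4$) sits exactly on the boundary at which both stabilizer sub-cases just fail to collapse.
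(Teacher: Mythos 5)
Your proof is correct in substance but follows a genuinely different route from the paper's. The paper deduces Theorem~\ref{tfthm} from Theorem~\ref{main} with $k=5$, which it proves by analysing a shortest odd circuit $C$ directly: it shows $\cl_M(C)=C$, that every parallel class of $M\con C$ has at most $k$ elements, and then counts. That argument is elementary and self-contained, and it works uniformly for every odd $k\ge 5$, which is what yields the stronger odd-girth statement. Your argument --- induct on $r$, extract a hyperplane section dense enough to be affine by induction, pass to the three cosets of the resulting rank-$(r-2)$ flat $K$, encode triangle-freeness as the disjointness of $A_0$ from the translated sumset $A_1+A_2+k$, and invoke Kneser's addition theorem to force one coset to be empty --- is specific to triangles (a three-term linear relation), so it proves Theorem~\ref{tfthm} but does not give Theorem~\ref{main} for larger $k$. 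What it buys is a clean additive-combinatorial explanation of the constant $\tfrac{5}{16}$, and its induction-on-rank-via-hyperplane-sections scheme is exactly the Beutelspacher-style induction the paper itself uses for the Geometric Andr\'asfai--Erd\H{o}s--S\'os Theorem. I checked the Kneser step: $|S|>2^{r-4}$ forces $|S|\ge 2^{r-3}$, while $|A_1+A_2|\le 2^{r-2}-m_0<\tfrac{3}{8}2^{r-2}<2^{r-3}$, so the sumset, being a union of $S$-cosets, must indeed be empty.

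Two small repairs are needed. First, the averaging claim is false as a statement about real numbers: the mean of $|M\cap H|$ over hyperplanes is $|M|\cdot\frac{2^{r-1}-1}{2^r-1}<\frac{|M|}{2}$, so if $|M|$ were exactly $\frac{5}{16}2^r$ the mean would be strictly \emph{below} $\frac{5}{16}2^{r-1}$. You must use integrality, $|M|\ge 5\cdot 2^{r-4}+1$; then $(5\cdot 2^{r-4}+1)(2^{r-1}-1)-5\cdot 2^{r-5}(2^r-1)=11\cdot 2^{r-5}-1>0$ and the step goes through (and for $r=4$ one still needs the observation that $|M\cap H|$ is an integer exceeding $2.5$). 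Second, Kneser's theorem is stated for nonempty summands, so you should dispose of the case $A_1=\emptyset$ or $A_2=\emptyset$ at the outset --- then $M$ misses the hyperplane $H_1$ or $H_2$ and is affine immediately --- before applying it; as written, the concluding ``the sumset must be empty'' sits slightly at odds with having applied Kneser to sets assumed nonempty.
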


Govaerts and Storme prove the 
Geometric Andr\' asfai-Erd\H os-S\' os Theorem
by induction on $n$. The induction follows an existing
method introduced by Beutelspacher~[\ref{beut}],
but the base case (Theorem~\ref{tfthm})
requires some work. Our proof
of Theorem~\ref{tfthm} is a little easier, though,
Govaerts and Storme do prove a bit more; they
characterize the non-affine simple rank-$r$ triangle-free
binary matroids with $\frac{5}{16} 2^r$ elements.

\section{Connections with graph theory}

The following result is a weak version of Tur\' an's Theorem~[\ref{tur}].
\begin{theorem}
For all integers $t$ and $n$ with $n\ge t\ge 2$, if $G$ is a
simple $n$-vertex $K_t$-free graph, then
$|E(M)| \le \frac{t-2}{t-1} \binom{n}{2}$.
\end{theorem}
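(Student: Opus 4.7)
The plan is to apply Zykov symmetrization.  Among all simple $K_t$-free graphs on $n$ vertices with the maximum number of edges, let $G$ be chosen.  I would show that non-adjacency between distinct vertices is transitive on $V(G)$, so that $G$ is complete multipartite, and has at most $t-1$ parts (else $G$ would contain $K_t$).

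Suppose for contradiction that there exist distinct $u,v,w \in V(G)$ with $uv,uw\notin E(G)$ and $vw\in E(G)$.  If $\deg_G(u)\ge\max(\deg_G(v),\deg_G(w))$, I would form $G'$ by deleting every edge incident to $v$ or to $w$ and then adding $\{vx : x\in N_G(u)\}\cup\{wx : x\in N_G(u)\}$, making $v$ and $w$ into non-adjacent twins of $u$.  The change in edge count is $2\deg_G(u)-\deg_G(v)-\deg_G(w)+1 \ge 1$, with the $+1$ accounting for the fact that the edge $vw$ is removed only once.  The graph $G'$ is $K_t$-free: any $K_t$ in $G'$ contains at most one of $v,w$ (since they remain non-adjacent in $G'$) and cannot contain $u$ along with either (since $u$ is non-adjacent to $v$ and $w$ in $G'$), so swapping the used twin for $u$ exhibits a $K_t$ in $G$.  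This contradicts the maximality of $|E(G)|$.  In the remaining case $\deg_G(v)>\deg_G(u)$ (or symmetrically with $w$), the single-twin replacement of $u$ by $v$ yields a $K_t$-free graph with strictly more edges by the same reasoning.

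With $G$ now complete multipartite on at most $t-1$ parts, a standard smoothing argument (moving a vertex from a largest part to a smallest part does not decrease the edge count) shows that the Tur\'an graph $T(n,t-1)$ maximizes $|E(G)|$ among such graphs, and the stated bound follows by direct computation.  The main obstacle is the symmetrization step itself: one has to account carefully for the edge $vw$ in the double-twin move and verify in each case that no new copy of $K_t$ is introduced; once $G$ is known to be complete multipartite, the rest is routine.
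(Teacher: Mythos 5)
The paper gives no proof of this theorem at all --- it is quoted as background, with a citation to Tur\'an's 1941 paper --- so there is nothing to compare your argument against. Your Zykov symmetrization is carried out correctly: the edge count $2\deg_G(u)-\deg_G(v)-\deg_G(w)+1\ge 1$ for the double-twin move is right, the verification that no $K_t$ is created is right, and the case split on degrees is complete. This correctly reduces the problem to complete multipartite graphs with at most $t-1$ parts, and in fact proves the full Tur\'an theorem, which is more than the paper needs.

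The genuine problem is the last sentence: ``the stated bound follows by direct computation.'' It does not, because the bound as printed is false for every $t\ge 3$. Take $t=3$, $n=4$: the $4$-cycle $C_4$ is simple, triangle-free, and has $4$ edges, while $\frac{t-2}{t-1}\binom{n}{2}=\frac{1}{2}\binom{4}{2}=3$. More generally, when $(t-1)\mid n$ the Tur\'an graph $T(n,t-1)$ --- which your own argument identifies as the extremal graph --- has $\frac{t-2}{t-1}\cdot\frac{n^2}{2}$ edges, which strictly exceeds $\frac{t-2}{t-1}\binom{n}{2}$. So the ``direct computation'' at the end of your proof would actually refute the statement rather than establish it. The theorem as printed contains a typo (and $|E(M)|$ should read $|E(G)|$): the standard weak form of Tur\'an's theorem has $\frac{n^2}{2}$ in place of $\binom{n}{2}$. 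With that correction your argument closes immediately and you do not even need the smoothing step: if the parts have sizes $n_1,\dots,n_{t-1}$ (padding with empty parts), then
\[
|E(G)|=\frac{1}{2}\Bigl(n^2-\sum_i n_i^2\Bigr)\le\frac{1}{2}\Bigl(n^2-\frac{n^2}{t-1}\Bigr)=\frac{t-2}{t-1}\cdot\frac{n^2}{2}
\]
by Cauchy--Schwarz. You should either prove the corrected statement or note explicitly that the printed inequality cannot be proved because it is false.
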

One natural class of $K_t$-free graphs is the
class of $(t-1)$-colourable graphs. The stronger
version of Tur\' an's Theorem amounts to saying that
the densest $K_t$-free graphs are all $(t-1)$-colourable.
One might hope that, for some $\epsilon>0$,
all $n$-vertex $K_t$-free graphs with
at least $\left(\frac{t-2}{t-1} -\epsilon\right)\binom n 2$ 
edges are $(t-1)$-colourable.
However, this is not true as one can take the 
direct sum of a triangle-free graph with chromatic number $t$
and some sufficiently large and dense graph with
chromatic number $t-1$.

Andr\' asfai, Erd\H os and S\' os~[\ref{aes}] overcome this
issue by considering minimum degree instead of the number of edges.
Note that, if $G$ is an $n$-vertex graph with minumum degree
$\alpha n$, then $|E(G)| > \alpha \binom n 2$.
\begin{theorem}[Andr\' asfai-Erd\H os-S\' os Theorem]
Let $t\ge 3$ be an integer and let $\epsilon = \frac{1}{(t-1)(3t-4)}$.
Then, for each integer $n\ge t$, if $G$ is a simple $n$-vertex
$K_t$-free graph with minimum degree
$> \left(\frac{t-2}{t-1} -\epsilon\right) n$, then
$G$ is $(t-1)$-colourable.
\end{theorem}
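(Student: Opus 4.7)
My plan is to proceed by induction on $t$, matching the numerical threshold $\frac{t-2}{t-1} - \frac{1}{(t-1)(3t-4)}$ at each inductive stage.

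For the base case $t=3$, the hypothesis becomes $\delta(G) > \frac{2n}{5}$ with $G$ triangle-free, and the goal is bipartiteness. The central fact is that in a triangle-free graph $N(u)\cap N(v)=\emptyset$ for every edge $uv$, so $d(u)+d(v)\le n$ on every edge. I would argue by contradiction: suppose $G$ is not bipartite and let $C=v_1\cdots v_{2\ell+1}$ be a shortest odd cycle, so $C$ is chordless. The minimum degree hypothesis forces each $v_i$ to have many neighbours outside $C$; a careful double count of edges between $V(C)$ and $V(G)\setminus V(C)$, combined with triangle-freeness and the chordless structure of $C$, produces either a triangle or an odd cycle shorter than $C$, contradicting the choice of $C$.

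For the inductive step $t\ge 4$, assuming the theorem at level $t-1$, I would locate a vertex $v$ whose induced neighbourhood $G[N(v)]$ satisfies the hypothesis at level $t-1$. Since $G$ is $K_t$-free, $G[N(v)]$ is automatically $K_{t-1}$-free, so the real work is choosing $v$ with $\delta(G[N(v)]) > \bigl(\frac{t-3}{t-2}-\frac{1}{(t-2)(3t-7)}\bigr)|N(v)|$; an averaging argument over edges $uv$ of $G$, using the global minimum degree bound to lower-bound the common-neighbour sizes $|N(u)\cap N(v)|$, should produce such a $v$. Applying induction gives a proper $(t-2)$-colouring of $G[N(v)]$, and I would extend this to a $(t-1)$-colouring of $G$ by placing all non-neighbours of $v$ into one new colour class. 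The independence of the non-neighbours is the crux: any edge $xy$ with $x,y\notin N(v)$ would force, via the minimum degree hypothesis, a large common neighbourhood inside $N(v)$, and the inductive $(t-2)$-colouring together with pigeonhole would then produce a $K_{t-1}$ in $G[N(x)\cap N(y)\cap N(v)]$, hence a $K_t$ in $G$, contradicting $K_t$-freeness.

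The main obstacle is that the constant $\epsilon=\frac{1}{(t-1)(3t-4)}$ is tight, so the numerical bookkeeping must be exact: both the averaging step selecting $v$ and the independence argument for $V(G)\setminus N(v)$ consume precisely the slack that $\epsilon$ provides, and getting the denominators $3t-4$ and $3t-7$ to cascade correctly through the induction is what makes the argument rigid. The base case is similarly delicate because blow-ups of $C_5$ come close to saturating the $\frac{2n}{5}$ bound, forcing the shortest-odd-cycle analysis to be carried out with no wasted inequalities.
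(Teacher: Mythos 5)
The paper does not actually prove this theorem; it quotes it from Andr\'asfai, Erd\H os and S\'os and only supplies an argument for (a strengthening of) the base case, namely the observation that no vertex of $G$ has more than two neighbours on a shortest odd circuit $C$ of length at least $5$, whence $|C|\,\delta(G)\le\sum_{v\in V(C)}\deg(v)\le 2n$. Your base-case sketch is compatible with that argument, though you never isolate the one fact that makes it work (every vertex of $G$, not just vertices off $C$, has at most two neighbours on $C$); the ``$d(u)+d(v)\le n$ on every edge'' observation you call central plays no role in the clean count.

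The inductive step, however, contains a genuine error. Your plan is to $(t-2)$-colour $G[N(v)]$ by induction and then place all of $V(G)\setminus N(v)$ into one new colour class; this requires the non-neighbours of $v$ to be an independent set, and that can fail for every choice of $v$. Concretely, for $t=4$ take $K_{m,m,m}$ with parts $\{a_i\},\{b_i\},\{c_i\}$ and delete the $3m$ edges $a_ib_i$, $b_ic_i$, $a_ic_i$. For $m\ge 17$ this graph is $K_4$-free with $\delta=2m-2>\frac{5}{8}\cdot 3m$, yet for $v=a_1$ the non-neighbours include the adjacent pair $a_2,b_1$, and by symmetry the same happens at every vertex. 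The supporting argument you offer for independence is also logically reversed: pigeonhole applied to a large subset of a $(t-2)$-coloured vertex set yields two vertices in the \emph{same} colour class, i.e.\ a non-edge, and extracts no clique whatsoever --- indeed in the example above $N(x)\cap N(y)\cap N(v)$ is a large independent set. Your numerology for pushing the threshold $\frac{3t-7}{3t-4}$ down to $\frac{3t-10}{3t-7}$ inside $G[N(v)]$ is correct (it works for every $v$, so no averaging is needed), but that is the easy half; the genuine inductive step of Andr\'asfai--Erd\H os--S\'os requires a different and more delicate structural analysis than ``colour the neighbourhood and dump the rest.''
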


In some sense the geometric version is even nicer,
since the Geometric Andr\' asfai-Erd\H os-S\' os Theorem
implies the Bose-Burton Theorem, but 
it is not immediately evident whether or not the 
Andr\' asfai-Erd\H os-S\' os Theorem implies Tur\' an's Theorem.

The Andr\' asfai-Erd\H os-S\' os Theorem is proved by induction on $t$;
the base case is:
\begin{theorem}\label{base}
For each integer $n\ge 5$,
if $G$ is a simple $n$-vertex triangle-free graph with
minimum degree $> \frac 2 5 n$, then $G$ is bipartite.
\end{theorem}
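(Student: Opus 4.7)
The plan is to argue by contradiction: assume $G$ is not bipartite, and let $C = v_0 v_1 \cdots v_{\ell-1}$ be a shortest odd cycle in $G$. Since $G$ is triangle-free, $\ell \ge 5$. First I would establish two structural facts about $C$. Namely, $C$ is chordless (an odd chord would produce a shorter odd cycle, and an even chord would produce a triangle when $\ell = 5$, or more generally an odd cycle shorter than $\ell$ via parity), and each vertex $u \notin V(C)$ has at most two neighbours on $C$.

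For the second fact, I would argue that if $u$ is adjacent to $v_i$ and $v_j$ with $i \ne j$, then triangle-freeness forces $v_i v_j \notin E(G)$, and the two closed walks obtained by inserting the path $v_i u v_j$ into $C$ give two cycles, exactly one of which is odd; since its length must be at least $\ell$, the $C$-distance between $v_i$ and $v_j$ is forced to be $2$. In the five-cycle this already caps $|N(u) \cap V(C)|$ at $2$, since the neighbours of $u$ on $C$ form an independent set in $C_5$. In a longer $C$ of length $\ell \ge 7$, the condition that every pair of distinct neighbours is at $C$-distance exactly $2$ again caps $|N(u) \cap V(C)|$ at $2$.

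The main step is then a double count of edges between $V(C)$ and $V(G) \setminus V(C)$. From the $C$-side, each of the $\ell$ vertices of $C$ has degree strictly greater than $\tfrac{2}{5}n$ in $G$, and exactly two of its neighbours lie on $C$ (because $C$ is chordless), so the number of such edges exceeds $\ell \bigl(\tfrac{2}{5}n - 2\bigr)$. From the outside, the bound is $2(n - \ell)$. Combining gives $\ell \cdot \tfrac{2}{5} n < 2n$, i.e., $\ell < 5$, contradicting $\ell \ge 5$.

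I expect the only delicate point to be verifying that each outside vertex truly has at most two neighbours on $C$ in the case $\ell \ge 7$; the parity argument that forces $C$-distance $2$ between two common neighbours of an outside vertex is the key observation, and once that is in place the rest is a one-line degree count.
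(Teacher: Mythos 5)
Your proof is correct and follows essentially the same approach as the paper's: identify a shortest odd cycle $C$, observe that every vertex has at most two neighbours on $C$, and derive $|C|<5$ from a degree count over $V(C)$. The only difference is bookkeeping (you count edges crossing out of $C$ separately from the chordless-cycle edges, while the paper just bounds $\sum_{v\in V(C)}\deg(v)\le 2n$ directly), and you supply the distance-two parity argument that the paper leaves implicit.
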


They prove the following strengthening.
\begin{theorem}\label{oddgirth}
For each odd integer $k\ge 5$ and each integer $n\ge k$,
if $G$ is a simple $n$-vertex graph with
no odd-circuit of length less than $k$ and with
minimum degree $> \frac 2 k n$, then $G$ is bipartite.
\end{theorem}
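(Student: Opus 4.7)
The plan is to argue by contradiction. Suppose $G$ satisfies the hypotheses but is not bipartite, and let $C = v_0 v_1 \cdots v_{\ell-1}$ be a shortest odd cycle of $G$; then $\ell$ is odd and $\ell \ge k$. Observe first that $C$ is chordless, for a chord would partition $C$ into two cycles whose lengths sum to $\ell + 1$, so one of them is odd and strictly shorter than $\ell$, contradicting minimality.

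The main engine of the argument is the structural claim that every vertex $w \in V(G) \setminus V(C)$ has at most two neighbors on $C$. To establish it, suppose $v_i, v_j \in N(w) \cap V(C)$, and let $d$ and $\ell - d$ be the two arc-lengths of $C$ from $v_i$ to $v_j$. Attaching the path $v_i w v_j$ of length $2$ to each arc produces cycles of lengths $d + 2$ and $\ell - d + 2$, exactly one of which is odd since $\ell$ is odd. Minimality of $C$ forces that odd cycle to have length at least $\ell$, and a brief parity analysis then yields $\{d, \ell - d\} = \{2, \ell - 2\}$; so any two neighbors of $w$ on $C$ lie at cyclic distance exactly $2$. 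But in $\mathbb{Z}/\ell\mathbb{Z}$ with $\ell$ odd and $\ell \ge 5$, no three distinct elements can be pairwise at cyclic distance $2$, so $|N(w) \cap V(C)| \le 2$.

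The contradiction now comes from double-counting the edges between $V(C)$ and its complement. Since $C$ is chordless, each $v_i$ has exactly two neighbors in $V(C)$, so by the minimum-degree hypothesis the number of edges from $V(C)$ to $V(G) \setminus V(C)$ is
\[
\sum_{i=0}^{\ell-1} d(v_i) - 2\ell \;>\; \ell\left(\tfrac{2n}{k} - 2\right).
\]
On the other hand the structural claim bounds this count by $2(n - \ell)$. Combining gives $\ell\bigl(\tfrac{2n}{k} - 2\bigr) < 2n - 2\ell$, which simplifies to $\ell < k$, contradicting $\ell \ge k$.

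I expect the structural claim to be the only real obstacle. One has to exploit the minimality of $C$ (which is stronger than merely having odd girth at least $k$) together with the parity of $\ell$ to pin the cyclic distance between two neighbors of $w$ down to exactly $2$, and then verify the elementary fact about three points at pairwise distance $2$ in an odd-length cyclic group; once that is in hand the degree count falls out immediately.
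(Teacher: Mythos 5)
Your proof is correct and is essentially the paper's argument: both hinge on the fact that every vertex has at most two neighbours on a shortest odd cycle $C$, followed by double-counting degrees over $V(C)$ against the minimum-degree hypothesis. The only difference is cosmetic — you count edges leaving $C$ while the paper bounds $\sum_{v\in V(C)}\deg(v)\le 2n$ directly — and you supply the proofs of the chordlessness and two-neighbour facts that the paper merely asserts.
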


The above results on graphs bear a striking resemblance
to the results in the introduction,
where the role of ``chromatic number"
in graphs replaces ``critical number" in the geometric setting.
It is well known that the critical number and 
the chromatic number are related.
For example, if $G$ is a simple graph of chromatic number $\chi$
and $M(G)$ has critical number $c\ge 1$, then
$$ 2^{c-1}< \chi \le 2^c.$$
In particular, $G$ is bipartite if and only if $M(G)$ is affine.
Moreover, the characterization of bipartite graphs using 
odd circuits is in fact a specialization of well-known
result about binary matroids; see Oxley~[\ref{oxley}, Proposition 9.4.1].
\begin{theorem}\label{affine}
A simple binary matroid is affine
if and only if it does not contain a circuit of odd size.
\end{theorem}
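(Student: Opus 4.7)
The plan is as follows. First, reduce Theorem~\ref{affine} to the claim that, in a simple binary matroid $M$, the ground set $E(M)$ is itself a cocycle if and only if $M$ has no circuit of odd size. Second, observe that ``$E(M)$ is a cocycle'' is just a reformulation of ``$M$ is affine''. Each piece is then short.

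For the parity step, I would invoke the standard cycle--cocycle duality of binary matroids. Inside $\GF(2)^{E(M)}$, the cocycle space is the orthogonal complement of the cycle space (the span of the characteristic vectors of the circuits). Hence a subset $X \subseteq E(M)$ is a cocycle iff $|X \cap C|$ is even for every circuit $C$. Taking $X = E(M)$ shows that $E(M)$ is a cocycle iff every circuit $C$ satisfies $|C|$ even.

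For the geometric step, I would pick a $\GF(2)$-representation of $M$ with columns $v_1, \ldots, v_n \in \GF(2)^r$, where $r = \rank(M)$. The cocycles of $M$ correspond bijectively to linear functionals $\ell$ on $\GF(2)^r$ via $\ell \mapsto \{i : \ell(v_i) = 1\}$, so $E(M)$ being a cocycle is equivalent to the existence of an $\ell$ with $\ell(v_i) = 1$ for every $i$. After a change of basis so that $\ell$ reads off the first coordinate, each $v_i$ has first coordinate $1$, and this exhibits $M$ as a restriction of $\AG(r-1,2)$. The converse is immediate, since by definition $\AG(r-1,2)$ is the complement of a hyperplane (hence a cocircuit) in $\PG(r-1,2)$, so any restriction of an affine geometry inherits an ``all-ones'' functional witnessing its ground set as a cocycle.

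The only delicate point is the dictionary in the second step---relating cocycles to linear functionals on $\GF(2)^r$ and identifying $\AG(r-1,2)$ with the set of points on which a chosen functional takes the value $1$. Once these identifications are made, cycle--cocycle orthogonality closes the proof in a single line.
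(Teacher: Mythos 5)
Your proof is correct. Note, however, that the paper does not prove Theorem~\ref{affine} at all: it cites it as a known fact (Oxley, Proposition 9.4.1), so there is no in-paper argument to compare against. Your argument is the standard one, and both halves check out. The orthogonality step is right: for a binary matroid the cocycle space is the orthogonal complement over $\GF(2)$ of the cycle space, the latter is spanned by the characteristic vectors of circuits, so $E(M)$ lies in the cocycle space precisely when every circuit has even cardinality. For the geometric step, be aware that the paper already builds the identification into its definition (``affine \ldots\ that is, the ground set is itself a cocycle''), so that part of your work is re-deriving the dictionary the paper takes for granted; still, your derivation is sound, since cocycles of a rank-$r$ representation correspond to linear functionals $\ell\mapsto\{i:\ell(v_i)=1\}$, and ``all $v_i$ satisfy $\ell(v_i)=1$'' is exactly ``$M$ avoids the hyperplane $\ker\ell$'', i.e.\ $M$ is a restriction of $\AG(r-1,2)$. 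One minor point worth a sentence in a final write-up: if $M$ is only assumed isomorphic to a restriction of some $\AG(r'-1,2)$ with $r'>r$, you should restrict the witnessing functional to the span of the points to get a cocycle of $M$ itself; this is immediate but is the only place where the two phrasings of ``affine'' could be confused.
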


\section{The proofs}\label{proofs}

We start by proving Theorem~\ref{main},
which we reformulate here for convenience; the equivalence
between these formulations requires Theorem~\ref{affine}.
\begin{theorem}
Let $k\ge 5$ be an odd integer, 
let $M$ be a simple rank-$r$ binary matroid with $r\ge k-1$, and
let $C$ be a circuit of size $k$ in $M$.
If $M$ does not have an odd-circuit of length $<k$,
then $|M| \le \frac{k}{2^{k-1}} 2^r.$
\end{theorem}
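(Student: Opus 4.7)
The plan is to partition $E(M)$ by the cosets of the rank-$(k-1)$ flat $F := \cl_M(C)$ in $\GF(2)^r$; there are exactly $2^{r-k+1}$ such cosets, so it suffices to prove that each of them meets $M$ in at most $k$ elements, giving $|M| \le k \cdot 2^{r-k+1} = \frac{k}{2^{k-1}} 2^r$.

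Write $C = \{e_1, \dots, e_k\}$, so the only $\GF(2)$-relation among these vectors is $e_1 + \cdots + e_k = 0$. First I treat the coset $F$ itself. If $f \in (M \cap F) \setminus C$, then $f = \sum_{i \in S} e_i$ for some nonempty proper subset $S$ of $[k]$, and a routine minimality check shows that both $\{f\} \cup \{e_i : i \in S\}$ and $\{f\} \cup \{e_i : i \in [k] \setminus S\}$ are circuits, of sizes $|S|+1$ and $k-|S|+1$. Exactly one of these sizes is odd, and requiring it to be at least $k$ forces $|S| \in \{1, k-1\}$; either way $f \in C$, a contradiction. Hence $|M \cap F| = k$.

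Now fix $v \notin F$ and set $Y := M \cap (F+v)$. I claim $|Y| \le k$. Pick $y_0 \in Y$ (if $Y$ is empty the bound is trivial). For $y \in Y \setminus \{y_0\}$, $y + y_0$ is a nonzero element of $F$ and so equals $\sum_{i \in S} e_i = \sum_{i \in [k] \setminus S} e_i$ for some $S$. The analogous minimality check, now using $y, y_0 \notin F$, shows $\{y, y_0\} \cup \{e_i : i \in S\}$ and $\{y, y_0\} \cup \{e_i : i \in [k] \setminus S\}$ are circuits of sizes $|S|+2$ and $k-|S|+2$; the parity constraint now forces $|S| \in \{2, k-2\}$. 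Thus $y + y_0 = e_i + e_j$ for a uniquely determined pair $\pi(y) := \{i, j\} \in \binom{[k]}{2}$, and the map $\pi$ is injective.

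For distinct $y, y' \in Y \setminus \{y_0\}$, applying the same dichotomy to the pair $\{y, y'\}$ gives $y + y' = e_p + e_q$ for some pair. Writing $(e_{i_1} + e_{j_1}) + (e_{i_2} + e_{j_2}) = e_p + e_q$ with $\pi(y) = \{i_1, j_1\}$ and $\pi(y') = \{i_2, j_2\}$, and using that no four of the $e_\ell$ sum to zero (the only vanishing subsum is the full one, and $k \ne 4$), I obtain $|\pi(y) \cap \pi(y')| = 1$. Thus $\pi(Y \setminus \{y_0\})$ is a family of $2$-subsets of $[k]$ pairwise meeting in exactly one element; an elementary case check shows every such family is either contained in a sunflower with a common centre (size at most $k-1$) or is the three-edge family of a triangle (size $3$). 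Either way $|Y| \le k$, and summing over the $2^{r-k+1}$ cosets completes the proof. The main delicacy is the step forcing $|S| \in \{2, k-2\}$ in the non-trivial coset, which relies on analysing \emph{both} circuit representations arising from $e_1 + \cdots + e_k = 0$ and crucially on $k$ being odd with $k \ge 5$.
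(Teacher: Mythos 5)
Your proof is correct and follows essentially the same strategy as the paper's: both partition $E(M)$ by the cosets of the flat $\cl_M(C)$, show that the flat itself meets $M$ only in $C$ (your $|S|\in\{1,k-1\}$ dichotomy is the paper's Case~1), and show that each remaining coset meets $M$ in at most $k$ points via the four-element circuits $\{y,y_0,e_i,e_j\}$ (the paper's Case~2 claim). The only cosmetic difference is the endgame: the paper assumes a coset has more than $k$ points, extracts two disjoint pairs, and builds a shorter odd circuit by symmetric differences, whereas you show directly that the pairs pairwise intersect and then apply the star-or-triangle bound --- but both arguments ultimately rest on the same fact that a pairwise-intersecting family of $2$-subsets of $[k]$ has at most $k-1$ members.
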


\begin{proof}
We break the proof into three cases.

\medskip

\noindent
{\bf Case 1:}{\em\quad
$r = k-1$.}

\medskip

Suppose, for a contradiction, that 
$|M| > k = |C|$.  Let $e\in E(M) - E(C)$.
Since $M$ is simple and binary and since
$r(C)=r(M)$ there is a partition $(C_1,C_2)$ of 
$C$ with $|C_1|,|C_2|\ge 2$ such that
$C_1\cup\{e\}$ and $C_2\cup\{e\}$ are both
circuits. However, since $C$ is odd, one of
$C_1\cup\{e\}$ and $C_2\cup\{e\}$ is odd.
This contradicts that $C$  is a smallest odd circuit
in $M$.

\medskip

\noindent
{\bf Case 2:}{\em\quad
$r = k$.}

\medskip

Suppose, for a contradiction, that $|M| > |C| + k$.
By Case 1, $E(M) - C$ is a cocircuit of $M$.

\medskip

\noindent
{\bf Claim:}{\em \quad For each $u_1,u_2\in E(M)-C$ 
there exist $v_1,v_2\in C$ such that
$\{u_1,u_2,v_1,v_2\}$ is a circuit}

\medskip

Since $M|(C\cup \{u_1,u_2\})$ is binary and
has co-rank $2$, its ground set partitions
into three series classes $(\{u_1,u_2\},C_1,C_2)$.
Since $C$ is odd, we may assume that $|C_1|$ is odd.
Now $C_1\cup \{u_1,u_2\}$ is an odd circuit.
Since $C$ is an odd circuit of minimum size,
$|C_1|=|C|-2$ and, hence, $|C_2|=2$.
Now $C_2\cup\{u_1,u_2\}$ gives the required circuit.

\medskip

Let $u\in E(M) -C$ and let $X=E(M) -(C\cup\{u\})$.
By the claim, for each $e\in X$ there
exists a two-element set $P_e\subseteq C$ such that
$P_e\cup \{u,e\}$ is a circuit.
Moreover, since $M$ is binary,
$P_e\neq P_f$ for distinct $e,f\in X$.
Since $|X|\ge |C|$, there exist $e,f\in X$
such that $P_e$ and $P_f$ are disjoint.  Since $M$ is binary,
the symmetric difference $Z$ of
$C$, $P_e$, and $P_f$ can be partitioned
into circuits. However $Z$ is smaller
than $C$ and has odd size; this contradicts
that $C$ is a minumum sized odd-circuit.

\medskip

\noindent
{\bf Case 3:}{\em\quad
$r > k$.}

\medskip

By Claim 1, $\cl_M(C) = C$. 
By Claim 2, each parallel class of $M\con C$ has size 
at most $k$. Moreover, $M\con C$ has rank $r-k+1$ and
hence it has at most $2^{r-k+1}-1$ points. Therefore
$$ |M| \le k (2^{r-k+1} -1) + k = \frac{k}{2^{k-1}} 2^r, $$
as required.
\end{proof}

Now we prove the Geometric Andr\' asfai-Erd\H os-S\' os Theorem
from Theorem~\ref{tfthm}. 
This proof is sketched in~[\ref{gs}]; Govaerts and Storme
attribute the method to Beutelspacher~[\ref{beut}].
We reformulate the result here for convenience.
\begin{theorem} 
For all integers  $r$ and $n$ with $r-2\ge n \ge 2$,
if $M$ is a simple rank-$r$ $\PG(n-1,2)$-free binary matroid
with $|M| > \left(1-\frac{11}{2^{n+2}}\right) 2^r$,
then $M$ has critical number at most $n-1$.
\end{theorem}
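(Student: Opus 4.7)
The plan is induction on $n$, with base case $n=2$ being exactly Theorem~\ref{tfthm}: $(1-\frac{11}{2^{4}})2^r=\frac{5}{16}2^r$, $\PG(1,2)$-freeness is triangle-freeness, and critical number at most one is the same as affine.

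For the inductive step, assume the result for $n-1$ and let $M$ satisfy the hypotheses for $n$. Realize $M$ as a restriction of $\PG(r-1,2)$ and set $S = E(\PG(r-1,2))\setminus E(M)$, so that $|S| < 11\cdot 2^{r-n-2}$. Observe that ``$M$ has critical number at most $n-1$'' is equivalent to $S$ containing the point set of some rank-$(r-n+1)$ flat of $\PG(r-1,2)$, while ``$M$ is $\PG(n-1,2)$-free'' is equivalent to $S$ meeting every rank-$n$ flat of $\PG(r-1,2)$.

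Following Beutelspacher's method~[\ref{beut}], I will seek a hyperplane $H$ of $\PG(r-1,2)$ satisfying
\begin{enumerate}
\item[(i)] $M\cap H$ is $\PG(n-2,2)$-free (equivalently, no rank-$(n-1)$ flat of $H$ lies entirely in $E(M)$); and
\item[(ii)] $|M\cap H| > (1-\frac{11}{2^{n+1}})2^{r-1}$, equivalently $|S\cap H| < 11\cdot 2^{r-n-2}$.
\end{enumerate}
Given such an $H$, the inductive hypothesis applied to $M\cap H$ inside its projective closure $F'\subseteq H$ (of rank $r':=r(M\cap H)$) produces a rank-$(r'-n+2)$ flat $F_0\subseteq F'$ that is disjoint from $E(M)$. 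Adjoining to $F_0$ any complement of $F'$ in $H$ (a subspace of rank $r-1-r'$) gives a rank-$(r-n+1)$ flat $F\subseteq H$; each point of $F\setminus F_0$ lies in $H\setminus F'$ and hence outside $E(M)\cap H$, while $F\subseteq H$ makes $F$ automatically disjoint from $E(M)\setminus H$. Thus $F\cap E(M)=\emptyset$, witnessing that $M$ has critical number at most $n-1$.

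The main obstacle is producing such an $H$. For (ii), the identity $\sum_H|S\cap H|=|S|(2^{r-1}-1)$ over the $2^r-1$ hyperplanes, together with $|S|<11\cdot 2^{r-n-2}$, forces fewer than $2^{r-1}$ hyperplanes to violate (ii), so a strict majority of hyperplanes satisfy (ii). For (i), each ``bad'' rank-$(n-1)$ flat $G\subseteq E(M)$ is contained in $2^{r-n+1}-1$ hyperplanes, so I must show that the number $B$ of bad $G$ is small enough that $B(2^{r-n+1}-1)<2^{r-1}$, forcing an $H$ that satisfies both (i) and (ii). The $\PG(n-1,2)$-freeness of $M$ forces each of the $2^{r-n+1}-1$ rank-$n$ extensions of every bad $G$ to contain a point of $S$; a careful double-count over triples (bad $G$, rank-$n$ extension $G'\supseteq G$, $S$-witness in $G'\setminus G$), combined with $|S|<11\cdot 2^{r-n-2}$, is what one uses to bound $B$. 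The slack constant $11/2^{n+2}$ is calibrated precisely so that this count closes, and this Beutelspacher-style estimate is the step that I expect to require the most care.
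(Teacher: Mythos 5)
Your reduction from ``a hyperplane $H$ satisfying (i) and (ii) exists'' to the conclusion is sound (the complement-of-the-closure argument inside $H$ is fine), and your base case agrees with the paper's. The genuine gap is the existence of $H$, and the specific route you propose cannot work. You want to bound the number $B$ of rank-$(n-1)$ flats contained in $E(M)$ so that $B(2^{r-n+1}-1)<2^{r-1}$, i.e.\ $B\lesssim 2^{n-2}$. But $B$ can be exponentially large in $r$: the extremal matroid $\BB(r,n-1)$ satisfies every hypothesis of the theorem (it is $\PG(n-1,2)$-free with $\bigl(1-\frac{8}{2^{n+2}}\bigr)2^r$ elements), and the rank-$(n-1)$ flats of $\PG(r-1,2)$ disjoint from the deleted rank-$(r-n+1)$ flat $K$ --- all of which lie inside $E(M)$ --- number on the order of $2^{(n-1)(r-n+1)}$. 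Worse, for this same example the hyperplanes $H$ for which $M\cap H$ is $\PG(n-2,2)$-free are exactly the $2^{n-1}-1$ hyperplanes containing $K$, a vanishing fraction of all $2^r-1$ hyperplanes, so no ``majority for (ii) plus union bound for (i)'' argument can locate one. The double count you defer to is aimed at a bound that is simply false, so this is not a detail to be filled in later but a dead end.

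The paper's argument inverts your strategy at exactly this point. Taking a minimal counterexample, it first finds a tangent line (a line meeting $E(M)$ in one point $p$; otherwise the complement $B$ is a flat and a $\PG(n-1,2)$ is found directly), and applies the induction hypothesis not to a hyperplane section of $M$ but to the set $X$ of points forming a triangle with $p$, which is dense and $\PG(n-2,2)$-free. This yields a rank-$(r-n+1)$ flat through $p$ consisting otherwise of missing points, hence a hyperplane $H$ with $|B\cap H|\ge 2^{r-n+1}-1$. Then, rather than arranging that $E(M)\cap H$ be $\PG(n-2,2)$-free, the paper shows by a second application of the induction hypothesis (a density count) that $E(M)\cap H$ must \emph{contain} a rank-$(n-1)$ flat $F$; each of the $2^{r-n}$ rank-$n$ flats through $F$ not contained in $H$ supplies a distinct point of $B$ outside $H$, giving $|B|\ge 2^{r-n}+2^{r-n+1}-1=\frac{12}{2^{n+2}}2^r-1$ and contradicting the density hypothesis. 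So the presence of a full $\PG(n-2,2)$ in a $B$-rich hyperplane is the engine of the proof, not an obstacle to be counted away.
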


\begin{proof}
Consider a counterexample $(r,n,M)$ with $n$ minimum.
Thus $M$ is a simple rank-$r$ $\PG(n-1,2)$-free binary matroid
with $|M|>\left(1-\frac{11}{2^{n+2}}\right) 2^r$ and
with critical number at least $n$.
By Theorem~\ref{tfthm}, $n\ge 3$.

Consider $M$ as a restriction of $\PG(r-1,2)$
and let $B$ denote the set of points not in $M$.
Thus $|B|  < \frac{11}{2^{n+2}}2^r -1$.

\medskip

\noindent
{\bf Claim 1:}{\em \quad
There is a line $l$ of $\PG(r-1,2)$ containing
exactly one point of $M$.}

\medskip

If not, then $B$ is a flat of $\PG(r-1,2)$.
Since $M$ has critical number at least $n$,
we have $r_M(B)\le r(M)-n$. So there 
is a rank-$n$ flat $F$ of $\PG(r-1,2)$ 
that is disjoint from $B$.
But then $M|F$ is isomorphic to $\PG(n-1,2)$.
This contradiction proves the claim.

\medskip

\noindent
{\bf Claim 2:}{\em \quad
There is a hyperplane $H$ of $\PG(r-1,2)$,
such that $|B\cap H|\ge 2^{r-n+1}-1$.}

\medskip

Let $l$ be a line containing exactly one point in $M$,
let $p\in l\cap E(M)$,
and let $H_0$ be a hyperplane of $M$ that does not contain $p$.
Let $X$ be the set of all points $q\in H_0\cap E(M)$
such that $\{p,q\}$ spans a triangle in $M$.
There are at most $2^{r-1}-2$ lines of $\PG(r-1,2)$
that contain $p$ and that contain at least one other point
of $M$. Each of these lines contains at most one point of 
$M\del (X\cup \{p\})$, so
$$ |M| \le 2^{r-1} -1 + |X|.$$
Thus $|X|> \left(1-\frac{11}{2^{n+1}}\right)2^{r-1}$.
Since $M$ is $\PG(n-1,2)$-free, $M|X$ is $\PG(n-2,2)$-free.
Therefore, by the minimality of the counterexample,
$M|X$ has critical number $\le n-2$.
Let $F_0$ be a rank-$(r-n+1)$ flat  in $H_0$ that is disjoint
from $X$ and let $F_1$ be the flat spanned by $F_0\cup \{p\}$.
By definition, $|F_1\cap B| \ge 2^{r-n+1}-1$.
We can extend $F_1$ to obtain the desired hyperplane;
this proves the claim.

\medskip

Let $H$ be a hyperplane satisfying Claim 2.

\medskip

\noindent
{\bf Claim 3:}{\em \quad
There is a rank-$(n-1)$ flat $F$ of $\PG(r-1,2)$
with $F\subseteq H\cap E(M)$.}

\medskip

Suppose otherwise; thus $M|(E(M)\cap H)$ is $\PG(n-2,2)$-free.
Since $M$ has critical number $\ge n$,
$M|(E(M)\cap H)$ has critical number $\ge n-1$.
Now, by the minimality of our counterexample,
$$ |E(M)\cap H| \le \left(1-\frac{11}{2^{n+1}}\right) 2^{r-1}.$$
Thus
$$ |M| \le |E(M)\cap H| + 2^{r-1} \le \left(1-\frac{11}{2^{n+2}}\right) 2^r,$$
giving the required contradiction. This proves the claim.

\medskip

Let $F$ be such a flat. There are 
$2^{r-n}$ flats of rank $n$ in $\PG(r-1,2)$
that contain $F$ but are not contained in $H$.
Since $M$ is $\PG(n-1,2)$-free,
each of these flats contains a point in $B$.
Thus $|B-H|\ge 2^{r-n}$. Therefore
$$
|B|  \ge   2^{r-n} + 2^{r-n+1} -1 
=\frac{12}{2^{n+2}} 2^r -1. 
$$
This contradiction completes the proof.
\end{proof}

\section{Extremal examples}\label{examples}

Our constructions are based on the following result.
\begin{lemma}\label{double}
Let $M$ be a simple rank-$r$ matroid, let $v\in E(M)$
such that each line containing $v$ has $3$ points,
and let $N$ be the restriction of $M$ to a hyperplane
not containing $v$. Then
\begin{itemize}
\item[(i)] $|M| = 2 |N| +1$.
\item[(ii)] $M\del v$ and $N$ have same critical number.
\item[(iii)] For each odd integer $k\ge 3$, if $M\del v$ has an
odd circuit of length $\le k$,
then $N$ has an odd circuit of length $\le k$.
\item[(iv)] For each integer $n\ge 2$, if $M\del v$ has a
$\PG(n-1,2)$-restriction, then $N$ has a $\PG(n-1,2)$-restriction.
\item[(v)] For each integer $n\ge 2$, if $N$ has a
$\PG(n-1,2)$-restriction, then $M$ has a $\PG(n,2)$-restriction.
\end{itemize}
\end{lemma}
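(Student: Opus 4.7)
The plan is to view $M$ as a restriction of $\PG(r-1,2)$ and work in $\bF_2$-coordinates (the paper's setting is binary throughout). Let $\phi\colon \bF_2^r \to \bF_2$ be the linear form cutting out $H$, so $\phi(v) = 1$. The 3-point-line hypothesis at $v$ says exactly that for every $x \in E(M)\setminus\{v\}$ the vector $v+x$ also lies in $E(M)$, and the involution $x \mapsto v+x$ on $E(M)\setminus\{v\}$ swaps $E(N) = E(M)\cap H$ with $E(M)\setminus(H\cup\{v\})$. Part (i) is immediate from this pairing, which is the structural fact used throughout.

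For (iii), let $C$ be an odd circuit of $M\del v$ of length $\ell \le k$; applying $\phi$ to the relation $\sum_{x\in C} x = 0$ shows $|C\setminus H|$ is even, so the set $C^{*} = (C\cap H)\,\triangle\,\{v+x : x \in C\setminus H\}$ is contained in $E(N)$ by the 3-point-line condition, sums to $0$ in $\bF_2^r$, and has the same parity as $\ell$; hence $C^{*}$ is a nonempty odd-sized cycle of $N$ of size at most $\ell$, whose decomposition into circuits contains an odd circuit of $N$ of size at most $\ell \le k$. For (iv), given a rank-$n$ subspace $V$ with $V\setminus\{0\} \subseteq E(M)\setminus\{v\}$, define $\psi\colon V \to H$ by $\psi(x)=x$ if $\phi(x)=0$ and $\psi(x)=v+x$ otherwise; a short case check on the parities of $\phi(x)$ and $\phi(y)$ shows $\psi$ is $\bF_2$-linear, and it is injective because $\psi(x)=0$ with $x\ne 0$ would force $v = x \in V$, contradicting $v \notin V$. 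The image is a rank-$n$ subspace of $H$ whose nonzero elements lie in $E(N)$ by the 3-point-line condition. For (v), given a rank-$n$ subspace $W \subseteq H$ with $W\setminus\{0\} \subseteq E(N)$, the subspace $W + \langle v\rangle$ has rank $n+1$ and its nonzero elements are of the form $w$, $v$, or $v+w$ for $w \in W\setminus\{0\}$, each of which lies in $E(M)$ either by hypothesis or by the 3-point-line condition applied to $w$.

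The main obstacle is (ii), since both inequalities between the critical numbers must be verified even though the ranks of $N$ and $M\del v$ differ by one. Suppose first that $N$ has critical number $c$, witnessed by a subspace $F \subseteq H$ of rank $r-1-c$ with $F \cap E(N) = \emptyset$, and set $F' = F + \langle v\rangle$, of rank $r-c$. The delicate step is verifying $F' \cap E(M\del v) = \emptyset$, where the key point is that $v + f \in E(M)$ for some nonzero $f \in F$ would, by the 3-point-line condition, force $f \in E(M) \cap H = E(N)$, contradicting $F \cap E(N) = \emptyset$. For the reverse inequality, start with a rank-$(r-c)$ subspace $F'$ with $F' \cap E(M\del v) = \emptyset$; a short case split on whether $v \in F'$ or $F' \subseteq H$ shows that $F' \cap H$ is a subspace of $H$ of rank at least $r-1-c$ disjoint from $E(N)$, giving that $N$ has critical number at most $c$.
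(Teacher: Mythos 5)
Your proof is correct, and for parts (i), (iii), (iv) and (v) it is essentially the paper's argument written out in $\bF_2$-coordinates: your map $\psi$ (send $x$ to $x$ or to $v+x$ according to the value of $\phi$) is exactly the composite of contracting $v$ and simplifying, which is how the paper handles (iii) and (iv), and your treatment of (v) matches the paper's one-line observation that $M$ is a conical lift. The one genuine divergence is (ii): the paper notes that the complementary restriction $\bar M$ of $\PG(r-1,2)$ (the points not in $M$, together with $v$) is itself a conical lift of the complement $\bar N$ of $N$ in the hyperplane, and then reads off (ii) from the flat-transfer statements already proved, since critical number $\le c$ is equivalent to the complement containing a rank-$(r-c)$ flat. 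You instead verify both inequalities directly by passing flats up via $F\mapsto F+\langle v\rangle$ and down via $F'\mapsto F'\cap H$; this is slightly longer but more self-contained, and it makes explicit the rank bookkeeping ($N$ has rank $r-1$ while $M\del v$ has rank $r$) that the paper leaves implicit. One small wording slip: in the downward direction of (ii) the relevant dichotomy is whether or not $F'\subseteq H$ (which determines whether $F'\cap H$ has rank $r-c$ or $r-c-1$), not whether $v\in F'$; but either way the rank is at least $r-1-c$ and the disjointness from $E(N)$ holds, so the argument stands.
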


Before we prove Lemma~\ref{double}, we introduce some definitions.
Note that $M$ is defined, up to isomorphism, from $N$.
We say that $M$ is a {\em conical lift} of $N$ and
that $M\del e$ is a {\em doubling} of $N$.

\begin{proof}[Proof of Lemma~\ref{double}.]
Note that (i) is trivial. Moreover,
since $\PG(n,2)$ is a conical lift of $\PG(n-1,2)$,
(v) is also trivial.

Consider $M$ as a restriction of $\PG(r-1,2)$ 
and let $H$ be the hyperplane of $\PG(r-1,2)$
containing $N$. Let $\bar N$ be the restriction
of $\PG(r-1,2)$ to $H-E(N)$ and let
$\bar M$ be the restriction of $\PG(r-1,2)$
to $(E(\PG(r-1,2) - E(M)) \cup \{v\}$.
Note that $\bar M$ is a conical lift of $\bar N$.
Hence (ii) follows from (v).

Now consider (iv). Suppose that $N_1$ is a restriction
of $M\del v$ that is isomorphic to $\PG(n-1,2)$.
Now $(M\con v)|E(N_1)$ is also isomorphic to
$\PG(n-1,2)$.
Since $N$ is isomorphic to the
simplification of $M\con v$,
$N$ has a restriction
isomorphic to $\PG(n-1,2)$, as required.

Finally, consider (iii).
Let $C$ be an odd circuit in $M\del v$.
We may assume that $C$ spans $v$ since otherwise
the proof goes as the proof of (iv).
Then there is an odd subset $C'$ of $C$ such that
$C'\cup \{v\}$ is a circuit in $M$.
Thus $C'$ is an odd circuit in $M'\con v$.
Since $N$ is isomorphic to the
simplification of $M\con v$, $N$ has an odd circuit
of length $|C'|\le k$.
\end{proof}

The following result shows that 
Theorem~\ref{main} is tight.
\begin{theorem}\label{maintight}
For each odd integer $k\ge 5$ and each integer $r\ge k-1$, 
there exists a non-affine rank-$r$ simple
$(\frac{k}{2^{k-1}} 2^r)$-element binary matroid with no
odd circuit of length less than $k$.
\end{theorem}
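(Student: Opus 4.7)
The plan is to construct the extremal matroids inductively using the doubling operation of Lemma~\ref{double}, starting from a single $k$-element circuit and doubling $r - k + 1$ times.

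For the base case $r = k - 1$, I take $N_0$ to be the simple binary matroid $U_{k-1,k}$, whose entire ground set is a single circuit of length $k$. Then $\rank(N_0) = k-1$ and $|N_0| = k = \frac{k}{2^{k-1}} \cdot 2^{k-1}$; the unique circuit has length $k$, so there is no odd circuit of length less than $k$, and by Theorem~\ref{affine} the matroid $N_0$ is non-affine.

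For $r > k - 1$, I recursively define $N_i$, for $i = 1, \dots, r-k+1$, to be a doubling of $N_{i-1}$. By Lemma~\ref{double}(i), $|N_i| = 2|N_{i-1}|$, and by construction of the conical lift the rank increases by one at each step (the apex is not a coloop, since every line through it has three points). Thus $N_{r-k+1}$ has rank $r$ and exactly $k \cdot 2^{r-k+1} = \frac{k}{2^{k-1}} \cdot 2^r$ elements. Moreover, $N_{i-1}$ is a restriction of $N_i$, so the original odd $k$-circuit of $N_0$ survives in every $N_i$, and non-affineness is preserved via Theorem~\ref{affine}.

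Finally, I must show that no shorter odd circuit is introduced along the way. Since $k$ is odd, an odd circuit of length less than $k$ has length at most $k - 2$. I would apply Lemma~\ref{double}(iii) with the odd integer $k - 2 \ge 3$ in place of $k$: in contrapositive form, if $N_{i-1}$ has no odd circuit of length $\le k - 2$, then neither does the doubling $N_i$. The inductive hypothesis is therefore preserved and the construction is complete. No real obstacle arises here --- Lemma~\ref{double} packages the construction exactly as needed --- and the only point that requires care is the parity bookkeeping when invoking (iii).
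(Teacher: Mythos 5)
Your proof is correct and takes essentially the same route as the paper, whose own proof is just a two-line sketch: take the $k$-element circuit in rank $k-1$ and repeatedly double, which is exactly your construction. Your added bookkeeping (the count $k\cdot 2^{r-k+1}$, the contrapositive use of Lemma~\ref{double}(iii) with $k-2$ in place of $k$, and non-affineness via the surviving $k$-circuit and Theorem~\ref{affine}) is all accurate.
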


\begin{proof}
When $r=k-1$, we take the circuit of length $k$.
Then we construct examples in higher rank by repeatedly doubling.
\end{proof}

The next result shows that the 
Geometric Andr\' asfai-Erd\H os-S\' os Theorem is tight;
these examples were given in~[\ref{gs}].
\begin{theorem}\label{gsttight}
For all integers $n$ and $r$ with $r-2\ge n\ge 2$,  
there is a simple rank-$r$ $\PG(n-1,2)$-free binary matroid 
with critical number $n$ and
with $\left(1-\frac{11}{2^{n+2}} \right) 2^r$ elements.
\end{theorem}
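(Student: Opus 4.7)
The plan is a double induction on $n$ and $r$. The outer base case $n=2$ is exactly Theorem~\ref{maintight} with $k=5$, because $(1-11/16)2^r=\tfrac{5}{16}2^r$ and, by Theorem~\ref{affine}, an odd-circuit-free binary matroid is affine (critical number $\le 1$); the doublings of $C_5$ produced there have critical number exactly $2$ by Lemma~\ref{double}(ii). For fixed $n$, the inner induction on $r\ge n+2$ has an easy step: I will take $M_{n,r}$ to be a doubling of $M_{n,r-1}$, and Lemma~\ref{double}(i),(ii),(iv) immediately give $|M_{n,r}|=2|M_{n,r-1}|=(1-11/2^{n+2})2^r$, preservation of critical number $n$, and preservation of $\PG(n-1,2)$-freeness.

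All the substance lies in the inner base case $r=n+2$ for $n\ge 3$. Assume by the outer induction that $M_{n-1,n+1}$ is a rank-$(n+1)$ $\PG(n-2,2)$-free binary matroid with critical number exactly $n-1$ and with $2^{n+1}-11$ elements. I will fix a hyperplane $H$ of $\PG(n+1,2)$, view $M_{n-1,n+1}$ as a restriction of $H$, and set
\[
    M_{n,n+2}\;=\;\bigl(\PG(n+1,2)\setminus H\bigr)\,\cup\,M_{n-1,n+1},
\]
which has $2^{n+1}+(2^{n+1}-11)=2^{n+2}-11$ elements. Writing $\bar M=H\setminus M_{n-1,n+1}$ for the resulting ten-point deletion set, I will verify the two remaining properties by two hyperplane-intersection arguments. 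A rank-$n$ flat $F\subseteq M_{n,n+2}$ must meet $H$ in a flat of rank $n$ (if $F\subseteq H$) or rank $n-1$ (otherwise), which in either case sits inside $M_{n-1,n+1}$; the first case is impossible because critical number $n-1$ means $M_{n-1,n+1}\subseteq \BB(n+1,n-1)$, and the latter is $\PG(n-1,2)$-free by a rank count, and the second case is impossible by the inductive $\PG(n-2,2)$-freeness of $M_{n-1,n+1}$. Dually, a rank-$k$ flat of $\PG(n+1,2)$ contained in $\bar M$ is the same as a rank-$k$ flat of $H$ disjoint from $M_{n-1,n+1}$; critical number exactly $n-1$ guarantees such a flat for $k=2$ (a line) but forbids one for $k=3$ (a plane), pinning the critical number of $M_{n,n+2}$ at exactly $n$.

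The main obstacle is this base-case verification: coordinating the two hyperplane-intersection alternatives with the containment $M_{n-1,n+1}\subseteq \BB(n+1,n-1)$ and the inductive $\PG(n-2,2)$-freeness of $M_{n-1,n+1}$ to rule out $\PG(n-1,2)$-restrictions, and simultaneously using the line/plane dichotomy inside $\bar M$ to trap the critical number at exactly $n$. Once the base case is in hand, doubling through Lemma~\ref{double} propagates everything mechanically to all larger $r$.
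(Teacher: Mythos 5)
Your proposal is correct, and its essential content --- placing a copy of the rank-$(r-1)$, critical-number-$(n-1)$ example inside a hyperplane of $\PG(r-1,2)$ and adjoining all points outside that hyperplane --- is exactly the paper's inductive construction, with the same verifications of the element count, $\PG(n-1,2)$-freeness, and critical number. The only difference is organizational: the paper applies this construction directly for every $r\ge n+2$ (inducting on $n$ alone, with $r$ dropping in tandem), so your separate inner induction on $r$ via doubling and Lemma~\ref{double} is sound but unnecessary.
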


\begin{proof}
For $n=2$, the examples come from 
Theorem~\ref{maintight}.
Suppose that $n\ge 3$ and that there
exists a
simple rank-$(r-1)$ $\PG(n-2,2)$-free binary matroid $N$
with $|N| = \left(1-\frac{11}{2^{n+1}} \right) 2^{r-1}$,
and with critical number $n-1$.
Let $H$ be a hyperplane in $\PG(r-1,2)$ and construct
a restriction $M$ of $\PG(r-1,2)$ by taking
a copy of $N$ in $H$ along with all points outside $H$.
Thus $M$ is $\PG(n-1,2)$-free, has critical number $n$, and has
$2^{r-1} +  \left(1-\frac{11}{2^{n+1}}\right) 2^{r-1}=
  \left(1-\frac{11}{2^{n+2}}\right) 2^{r}$ points.
\end{proof}

\section*{References}

\newcounter{refs}

\begin{list}{[\arabic{refs}]}%
{\usecounter{refs}\setlength{\leftmargin}{10mm}\setlength{\itemsep}{0mm}}

\item \label{aes}
B. Andr\' asfai, P. Erd\H os, V.T. S\' os,
On the connection between chromatic number, maximal clique
and minimum degree of a graph,
Discrete Math. 8 (1974) 205-218.

\item \label{beut}
A. Beutelspacher,
Blocking sets and partial spreads in finite projective spaces,
Geometriae Dedicata 9 (1980) 425-449.

\item \label{bb}
R.C. Bose, R.C. Burton,
A characterization of flat spaces in a finite geometry
and the uniqueness of the Hamming and the MacDonald codes,
J. Combin. Theory 1 (1966) 96-104.

\item \label{gs}
P. Govaerts, L. Storme,
The classification of the smallest nontrivial blocking sets in $\PG(n,2)$,
J. Combin. Theory Ser. A 113 (2006) 1543-1548.

\item\label{oxley}
J.G. Oxley,
{\em Matroid Theory}, second edition, Oxford University Press, New York, 2011.

\item\label{tur}
P. Tur\' an,
Eine extremalaufgabe aus der Graphentheorie,
Mat. \'es Fiz. Lapok 48 (1941) 436-452.

\end{list}

\end{document}